\newtheorem{theorem}{Theorem}[section]
\newtheorem{lemma}[theorem]{Lemma}
\newtheorem{proposition}[theorem]{Proposition}
\newtheorem{corollary}[theorem]{Corollary}
\theoremstyle{definition}
\newtheorem{definition}[theorem]{Definition}
\newtheorem{example}[theorem]{Example}
\numberwithin{equation}{section}
\DeclareMathOperator{\GL}{GL}
\DeclareMathOperator{\Gr}{Gr}
\DeclareMathOperator{\Hom}{Hom}
\DeclareMathOperator{\Iso}{Iso}
\DeclareMathOperator{\Euler}{Euler}
\newcommand{\R}{\mathbbm R}
\newcommand{\C}{\mathbbm C}
\newcommand{\Z}{\mathbbm Z}
\newcommand{\op}{\oplus}
\DeclareMathOperator{\Id}{Id}
\DeclareMathOperator{\geo}{geo}
\DeclareMathOperator{\rep}{rep}
\DeclareMathOperator{\Sub}{Sub}
\newcommand{\g}{{\mathfrak{g}}}
\newcommand{\z}{{\mathfrak{z}}}
\newcommand{\CE}{{\mathcal E}}
\newcommand{\CF}{{\mathcal F}}
\newcommand{\CG}{{\mathcal G}}
\newcommand{\CH}{{\mathcal H}}
\newcommand{\CP}{{\mathcal P}}
\newcommand{\CS}{{\mathcal S}}
\newcommand{\CV}{{\mathcal V}}
\newcommand{\CW}{{\mathcal W}}
\newcommand{\vn}{{\bm n}}
\newcommand{\vsigma}{{\bm  \sigma}}
\newcommand{\valpha}{{\bm \alpha}}
\newcommand{\vbeta}{{\bm \beta}}
\newcommand{\vtau}{{\bm \tau}}
\newcommand{\vdelta}{{\bm \delta}}
\newcommand{\vlambda}{{\bm \lambda}}
\newcommand{\vtheta}{{\bm  \theta}}
\newcommand{\vgamma}{{\bm \gamma}}
\newcommand{\vA}{{\bm A}}
\newcommand{\vB}{{\bm B}}
\newcommand{\vN}{{\bm N}}
\newcommand{\vxi}{{\bm  \xi}}
\begin{document}
\title[Quiver subrepresentations and the saturation property]{Quiver subrepresentations and the Derksen-Weyman saturation property}
\author{Velleda Baldoni}
\address{Velleda Baldoni: Dipartimento di Matematica, Universit\`a degli studi di Roma ``Tor Vergata'', Via della ricerca scientifica 1, I-00133, Italy}
\email{baldoni@mat.uniroma2.it}
\author{Mich\`ele Vergne}
\address{Mich\`ele Vergne: Universit\'e Paris Cit\'e, Institut Math\'ematique de Jussieu, Sophie Germain, case 75205, Paris Cedex 13, France}
\email{michele.vergne@imj-prg.fr}
\author{Michael Walter}
\address{Michael Walter: Ruhr University Bochum, Faculty of Computer Science, Universit\"atsstr.~150, 44801 Bochum, Germany}
\email{michael.walter@rub.de}
\date{}
\hypersetup{pdfauthor={Velleda Baldoni, Michèle Vergne, Michael Walter}}

\begin{abstract}
Using Schofield's characterization of the dimension vectors of general subrepresentations of a representation of a quiver, we give a direct proof of the Derksen-Weyman saturation property.
\end{abstract}
\maketitle

\section{Introduction}
Let $Q=(Q_0,Q_1)$ be a quiver, let $\vn=(n_x)_{x\in Q_0}$ be a dimension vector, and $\GL_Q(\vn)=\prod_{x\in Q_0}GL(n_x)$.
Given a character $\det(g)^{\vsigma}$ of $\GL_Q(\vn)$ such that $\vsigma$ satisfy King's inequalities (\cref{def:Kingcone}), we construct a non-zero semi-invariant polynomial function of weight~$\vsigma$ (depending on an additional parameter) on the space $\CH_Q(\vn)$ of representations of the quiver~$Q$ in dimension~$\vn$.
Our proof directly uses Schofield's characterization of the dimension vectors of general subrepresentations of a representation of~$Q$.
This reproves the celebrated theorem of Derksen-Weyman~\cite{MR1758750} that the semi-group of weights of semi-invariants is saturated.
Particular consequences of this are the saturation property for the Littlewood-Richardson or Clebsch-Gordan coefficients of the general linear group, first proved by~Knutson-Tao~\cite{MR1671451} and as discussed in~\cites{MR1758750,Cra-Bo-Ge}, and more generally the saturation property for the multiplicity function for the action of $\GL_Q(\vn)$ on polynomials on $\CH_Q(\vn)$, as indicated to us by Ressayre~\cite{ressayrepc} (see Theorem~1.5 in~\cite{MR4671379}).

Derksen-Weyman deduced the saturation property from a main theorem, which states that the semi-invariants are linearly spanned by the so-called determinantal semi-invariants (\cref{def:cvw}).
The nonzero semi-invariants that we obtain in our proof are particular such semi-invariants.
However, we do not require (but also do not recover) the result that all semi-invariants are determinantal~\cites{MR1908144,MR1758750}.
Instead, our proof is based on the simple observation that King's inequalities for weights of semi-invariants are ``the same'' as Schofield's inequalities for subdimension vectors 
(\cref{pro:king vs schofield}).

\section{Schofield Vectors}\label{sec:repQ}
Let $Q=(Q_0,Q_1)$ be a quiver without oriented cycles, where~$Q_0$ is the finite set of vertices and~$Q_1$ the finite set of arrows.
We use the notation~$a : x\to y$ for an arrow~$a\in Q_1$ from~$x\in Q_0$ to~$y\in Q_0$.
A \emph{dimension vector} for~$Q$ is a family~$\vn = (n_x)_{x\in Q_0}$ of nonnegative integers.
Given a family of vector spaces~$\CV=(V_x)_{x\in Q_0}$, denote by $\dim\CV$ the dimension vector with components~$(\dim \CV)_x = \dim V_x$.
The space of \emph{representations} of the quiver~$Q$ on~$\CV$ is given by
\begin{align*}
  \CH_Q(\CV) \coloneqq \bigoplus_{a : x\to y\in Q_1} \Hom(V_x, V_y).
\end{align*}
Its elements are families~$r=(r_a)_{a\in Q_1}$ of linear maps~$r_a : V_x\to V_y$, one for each arrow $a : x\to y$ in $Q_1$.
The Lie group~$\GL_Q(\CV) = \prod_{x\in Q_0} \GL(V_x)$ acts on $\CH_Q(\CV)$ as follows:
given any $g\in \GL_Q(\CV)$ and $r\in \CH_Q(\CV)$, the result of acting by $g$ on $r$ is denoted by~$grg^{-1}$, where
 \begin{align*}
   (g r g^{-1})_{a : x\to y\in Q_1} = g_y r_a g_x^{-1}.
 \end{align*}

Let $\CS=(S_x)_{x\in Q_0}$ be a family of subspaces~$S_x \subseteq V_x$.
Then~$\CS$ is called a \emph{subrepresentation} of~$r\in\CH_Q(\CV)$ if $r_a (S_x)\subset S_y$ for every arrow~$a: x\to y$ in~$Q_1$; we denote this by $r(\CS)\subseteq \CS$.
That is, there exists a representation~$s \in \CH_Q(\CS)$ such that $\iota_y \circ s_a = r_a \circ \iota_x$ for every arrow $a: x\to y$ in~$Q_1$, where $\iota_x : S_x \subseteq V_x$ denotes the inclusion map.
Similarly, a \emph{quotient representation} of $r\in \CH_Q(\CV)$ is given by a family of vector spaces~$\CW=(W_x)_{x \in Q_0}$ and surjective linear maps~$q_x:V_x\to W_x$ such that there exists $\bar r \in \CH_Q(\CW)$ with $q_y \circ r_a = \bar r_a \circ q_x$ for every arrow~$a: x\to y$. 
Equivalently, $\ker(q) \coloneqq (\ker(q_x))_{x \in Q_0}$ is a subrepresentation of~$r$.
Denote by~$Q^*$ the quiver $(Q_0,Q_1^*)$, which has the same vertices as~$Q$ but the directions of all arrows are reversed.
If $r\in \CH_Q(\CV)$, then the adjoint~$r^*$ is a representation in $\CH_{Q^*}(\CV^*)$.
Then a quotient representation of $r\in \CH_Q(\CV)$ in~$\CW$ gives rise to a subrepresentation~$\CW^*$ of~$r^*$, and vice versa.

Schofield characterized inductively the dimension vectors~$\valpha$ such that every~$r\in\CH_Q(\CV)$ has a subrepresentation~$\CS$ with $\dim\CS = \valpha$.
(It is easy to see that it suffices to check this condition on a Zariski-open set of representations~$r$.)
We call such a dimension vector a \emph{Schofield subdimension vector} and denote this by $\valpha\hookrightarrow_Q \vn$, where~$\dim\CV = \vn$.
Similarly, we can consider the dimension vectors~$\vbeta$ such that any~$r\in\CH_Q(\CV)$ has a quotient representation in~$\CW$ with $\dim\CW = \vbeta$.
We call such a dimension vector a \emph{Schofield quotient dimension vector} and denote this property by $\vn \twoheadrightarrow_Q \vbeta$, where $\dim\CV = \vn$.
Given dimension vectors~$\valpha, \vbeta$ such that $\vn = \valpha + \vbeta$, the condition $\valpha \hookrightarrow_Q \vn$ is equivalent to~$\vn \twoheadrightarrow_Q \vbeta$, as well as to~$\vbeta \hookrightarrow_{Q^*} \vn$.

Given $\CV$ and a dimension vector $\valpha$, $\Gr_Q(\valpha,\CV) = \prod_{x\in Q_0} \Gr(\alpha_x, V_x)$ is defined as the product of the Grassmannians~$\Gr(\alpha_x,V_x)$ of subspaces of~$V_x$ of dimension~$\alpha_x$.
For a given representation~$r\in\CH_Q(\CV)$, we define the corresponding \emph{quiver Grassmannian} by
\begin{align*}
  \Gr_Q(\valpha,\CV)_r \coloneqq \{ \CS \in \Gr_Q(\valpha, \CV) : r\CS \subseteq \CS \}.
\end{align*}
Thus a Schofield subdimension vector is a dimension vector~$\valpha$ such that $\Gr_Q(\valpha,\CV)_r \neq \emptyset$ for every representation~$r\in\CH_Q(\CV)$.
Let $\vn = \dim \CV$.
Then the dimension of each irreducible component of the quiver Grassmannian~$\Gr_Q(\valpha,\CV)_r$ is, for generic $r\in\CH_Q(\CV)$, given by $\Euler_Q(\valpha,\vn - \valpha)$, where we have introduced the \emph{Euler bilinear form}
\begin{align}\label{eq:euler form}
  \Euler_Q(\valpha,\vbeta)
\coloneqq \sum_{x\in Q_0} \alpha_x \beta_x - \!\!\!\!\sum_{a:x\to y\in Q_1}\!\!\!\! \alpha_x\beta_y.
\end{align}
Thus, a necessary condition for $\valpha \hookrightarrow_Q \vn$ is $\Euler_Q(\valpha,\vn - \valpha)\geq 0$.
This is not sufficient.
Necessary and sufficient conditions are given by the following theorem by Schofield~\cite{MR1162487}*{Theorem~5.4}.

\begin{theorem}[Schofield]
Let ${\valpha},{\vbeta}$ be dimension vectors and~$\vn=\valpha+\vbeta$.
Then $ \valpha \hookrightarrow_Q \vn$  if and only if
$\Euler_Q(\vgamma,\vbeta)\geq 0$ for any $\vgamma \hookrightarrow_Q \valpha$.
\end{theorem}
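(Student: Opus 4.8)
The plan is to translate the property $\valpha\hookrightarrow_Q\vn$ into the vanishing of a generic $\Ext^1$ and then read off the inequalities. Fix an algebraically closed ground field and view representations of the acyclic quiver $Q$ as modules over its path algebra, which is finite dimensional and hereditary; thus for representations $M,N$ one has the Euler identity $\dim\Hom_Q(M,N)-\dim\Ext^1_Q(M,N)=\Euler_Q(\dim M,\dim N)$. Write $\ext_Q(\valpha,\vbeta)$ for the minimum of $\dim\Ext^1_Q(M,N)$ over all $M,N$ with $\dim M=\valpha$ and $\dim N=\vbeta$; by upper semicontinuity this minimum is attained on a dense open set of pairs. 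The first step is the geometric reformulation
\begin{align*}
  \valpha\hookrightarrow_Q\vn\qquad\Longleftrightarrow\qquad\ext_Q(\valpha,\vn-\valpha)=0.\tag{A}
\end{align*}

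To prove (A), consider the incidence variety $Z\coloneqq\{(r,\CS):r\in\CH_Q(\CV),\ \CS\in\Gr_Q(\valpha,\CV),\ r\CS\subseteq\CS\}$. The projection to $\Gr_Q(\valpha,\CV)$ exhibits $Z$ as a $\GL_Q(\CV)$-homogeneous vector bundle over a smooth irreducible base, the fibre over $\CS$ being the linear space of representations preserving $\CS$, of codimension $\sum_{a:x\to y\in Q_1}\alpha_x(n_y-\alpha_y)$ in $\CH_Q(\CV)$; hence, with $\vbeta\coloneqq\vn-\valpha$, one gets $\dim Z=\dim\CH_Q(\CV)+\Euler_Q(\valpha,\vbeta)$, and in particular $Z$ is smooth and irreducible. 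Now $\valpha\hookrightarrow_Q\vn$ means exactly that the other projection $p\colon Z\to\CH_Q(\CV)$ is dominant. At a point $(r,\CS)\in Z$ with quotient representation $\CL\coloneqq\CV/\CS$, the kernel of $dp$ is the tangent space at $\CS$ to the fibre $\Gr_Q(\valpha,\CV)_r$, which is canonically $\Hom_Q(\CS,\CL)$; together with $\dim T_{(r,\CS)}Z=\dim Z$ and the Euler identity this gives $\codim\bigl(\operatorname{im}(dp)\subseteq\CH_Q(\CV)\bigr)=\dim\Ext^1_Q(\CS,\CL)$. So $p$ is submersive at $(r,\CS)$ iff $\Ext^1_Q(\CS,\CL)=0$; since $Z$ is irreducible, $p$ is dominant iff such a point exists, and taking $r$ to be a direct sum realizing a pair of representations of dimensions $\valpha,\vbeta$ with vanishing $\Ext^1$ shows this is equivalent to $\ext_Q(\valpha,\vbeta)=0$. (Conversely, if $p$ is dominant then its generic fibre has dimension $\Euler_Q(\valpha,\vbeta)$ and is generically smooth, forcing, at a generic point, $\dim\Hom_Q(\CS,\CL)=\Euler_Q(\valpha,\vbeta)$ and hence $\Ext^1_Q(\CS,\CL)=0$.) This proves (A).

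By (A) it remains to show that $\ext_Q(\valpha,\vbeta)=0$ holds iff $\Euler_Q(\vgamma,\vbeta)\ge0$ for every $\vgamma\hookrightarrow_Q\valpha$; I would prove the sharper identity
\begin{align*}
  \ext_Q(\valpha,\vbeta)=\max\bigl\{-\Euler_Q(\vgamma,\vbeta)\ :\ \vgamma\hookrightarrow_Q\valpha\bigr\},\tag{B}
\end{align*}
which yields the theorem at once via (A): the value $\vgamma=0$ always occurs in the maximum with $-\Euler_Q(0,\vbeta)=0$, so the maximum equals $0$ precisely when every term is $\le0$. The right-hand side is read recursively: each condition $\vgamma\hookrightarrow_Q\valpha$ has ambient dimension vector $\valpha$ with $|\valpha|\le|\vn|$ and is resolved by the theorem itself at smaller size. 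The inequality ``$\ge$'' in (B) is the routine half: fix a generic $N$ with $\dim N=\vbeta$; a generic $M$ with $\dim M=\valpha$ then has $\dim\Ext^1_Q(M,N)=\ext_Q(\valpha,\vbeta)$ and, for any given $\vgamma\hookrightarrow_Q\valpha$, a submodule $M'$ with $\dim M'=\vgamma$; applying $\Hom_Q(-,N)$ to $0\to M'\to M\to M/M'\to0$ and using right exactness of $\Ext^1_Q(-,N)$ over the hereditary algebra yields a surjection $\Ext^1_Q(M,N)\twoheadrightarrow\Ext^1_Q(M',N)$, whence $-\Euler_Q(\vgamma,\vbeta)=\dim\Ext^1_Q(M',N)-\dim\Hom_Q(M',N)\le\dim\Ext^1_Q(M,N)=\ext_Q(\valpha,\vbeta)$.

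The inequality ``$\le$'' in (B) is where the real work lies, and is the heart of Schofield's argument \cite{MR1162487}. For generic $M$ with $\dim M=\valpha$ and generic $N$ with $\dim N=\vbeta$, one must produce a \emph{submodule} $M'\subseteq M$ — which then automatically satisfies $\dim M'\hookrightarrow_Q\valpha$ — with $\Hom_Q(M',N)=0$ and $\dim\Ext^1_Q(M',N)\ge\ext_Q(\valpha,\vbeta)$, for then $-\Euler_Q(\dim M',\vbeta)=\dim\Ext^1_Q(M',N)\ge\ext_Q(\valpha,\vbeta)$ as required. Such an $M'$ is obtained by induction on $\dim M$: one passes to a proper submodule through which no nonzero homomorphism to $N$ factors, keeping track of the $\Hom_Q(-,N)$/$\Ext^1_Q(-,N)$ long exact sequence so that the dimension of $\Ext^1$ against $N$ is not lost, and iterates. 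The delicate point — the main obstacle — is to guarantee that such a submodule can be chosen generically enough, relative to the fixed generic $N$, that these connecting homomorphisms behave as claimed; this is where the genericity of both $M$ and $N$, together with the inductive hypothesis, are all used. Combining ``$\ge$'' and ``$\le$'' in (B) with the reformulation (A) completes the proof.
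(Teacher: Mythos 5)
This statement is one the paper does not prove at all: it is quoted verbatim from Schofield \cite{MR1162487}*{Theorem~5.4} and used as a black box, so there is no in-paper argument to compare against. Your reconstruction follows the architecture of Schofield's original proof: step (A), the reduction of $\valpha\hookrightarrow_Q\vn$ to generic $\Ext^1$-vanishing via the incidence variety, is correctly and completely argued (the dimension count, the identification of $\ker dp$ with $\Hom_Q(\CS,\CL)$, and the smoothness/irreducibility of $Z$ are all right), and the ``$\geq$'' half of (B) via right-exactness of $\Ext^1_Q(-,N)$ over a hereditary algebra is also correct.

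The genuine gap is the ``$\leq$'' half of (B), which you yourself flag as ``where the real work lies'' and then do not carry out. This is not a routine omission: it is the entire content of Schofield's theorem beyond the easy direction, and the sketch you give does not yet constitute a proof strategy that visibly closes. Two specific points. First, producing, for generic $M$ and $N$, a submodule $M'\subseteq M$ with $\Hom_Q(M',N)=0$ and $\dim\Ext^1_Q(M',N)\geq\ext_Q(\valpha,\vbeta)$ is exactly the hard construction (Schofield does it by analyzing a generic extension $0\to N\to E\to M\to 0$ and a careful induction through Sections~3--5 of his paper, and his Theorem~5.2 in fact requires passing to a quotient of $N$ as well as a submodule of $M$; that the one-sided form you state suffices is part of what must be argued). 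Second, your parenthetical ``which then automatically satisfies $\dim M'\hookrightarrow_Q\valpha$'' is not automatic: a single generic $M$ having \emph{some} submodule of dimension vector $\vgamma$ does not immediately give $\vgamma\hookrightarrow_Q\valpha$; one must argue that the same $\vgamma$ occurs for a dense set of $M$ (and then use properness of the projection from the incidence variety), while simultaneously preserving the $\Hom$/$\Ext$ conditions against the fixed $N$. Until the ``$\leq$'' direction is actually proved, the proposal establishes only the ``only if'' direction of the theorem.
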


Equivalently, using the quiver~$Q^*$ and the fact that $\Euler_Q(\valpha,\vbeta)=\Euler_{Q^*}(\vbeta,\valpha)$, necessary and sufficient conditions for ${\vbeta}$ to be a Schofield quotient dimension vector are given as follows:

\begin{theorem}[Schofield]\label{theo:crit}
Let ${\valpha},{\vbeta}$ be dimension vectors and $\vn=\valpha+\vbeta$.
Then $\vn\twoheadrightarrow_Q  \vbeta$  if and only if
$\Euler_Q(\valpha,\vgamma)\geq 0$ for every~$\vgamma$
such that~$\vbeta \twoheadrightarrow_Q \vgamma$ (equivalently, $\vgamma \hookrightarrow_{Q^*} \vbeta$).
\end{theorem}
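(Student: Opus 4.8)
\textit{Proof proposal.} The plan is to derive \cref{theo:crit} from the preceding (first) theorem of Schofield simply by passing to the opposite quiver~$Q^*$. The two ingredients I would use are already recorded above: first, the duality dictionary stating that, for $\vn=\valpha+\vbeta$, one has the chain of equivalences $\valpha\hookrightarrow_Q\vn \iff \vn\twoheadrightarrow_Q\vbeta \iff \vbeta\hookrightarrow_{Q^*}\vn$; second, the symmetry $\Euler_Q(\valpha,\vbeta)=\Euler_{Q^*}(\vbeta,\valpha)$, which is immediate from~\eqref{eq:euler form} because reversing an arrow $a:x\to y$ to $a^*:y\to x$ merely interchanges the two indices appearing in the corresponding summand, while the diagonal term $\sum_x\alpha_x\beta_x$ is symmetric anyway.

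With these in hand the argument is a short substitution. Starting from the left-hand side of \cref{theo:crit}, the dictionary (applied to the given decomposition $\vn=\valpha+\vbeta$) turns $\vn\twoheadrightarrow_Q\vbeta$ into $\vbeta\hookrightarrow_{Q^*}\vn$. Now apply Schofield's first theorem verbatim, but with $Q$ replaced by $Q^*$ and with the decomposition written as $\vn=\vbeta+\valpha$, so that $\vbeta$ plays the role of ``$\valpha$'' and $\valpha$ that of ``$\vbeta$''; this yields that $\vbeta\hookrightarrow_{Q^*}\vn$ holds if and only if $\Euler_{Q^*}(\vgamma,\valpha)\geq 0$ for every $\vgamma\hookrightarrow_{Q^*}\vbeta$. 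Finally translate this back: by the Euler symmetry $\Euler_{Q^*}(\vgamma,\valpha)=\Euler_Q(\valpha,\vgamma)$, and by the dictionary applied once more (now to $\vbeta=(\vbeta-\vgamma)+\vgamma$) the indexing condition $\vgamma\hookrightarrow_{Q^*}\vbeta$ is equivalent to $\vbeta\twoheadrightarrow_Q\vgamma$ — which is exactly the parenthetical equivalence asserted in the statement. Substituting these two identifications into the criterion gives precisely \cref{theo:crit}.

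I do not anticipate any genuine obstacle here: this proposition is a pure reformulation of Schofield's theorem, and the only thing to be careful about is bookkeeping — performing the variable substitutions into the first theorem and into the duality dictionary consistently, and checking once and for all the arrow-reversal symmetry of $\Euler$. Consequently I expect the actual write-up to be just the three displayed substitutions above, with the Euler-form identity verified in a single line.
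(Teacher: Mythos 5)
Your proposal is correct and matches the paper exactly: the paper also obtains \cref{theo:crit} from the first Schofield theorem by passing to $Q^*$, invoking the duality $\vn\twoheadrightarrow_Q\vbeta \iff \vbeta\hookrightarrow_{Q^*}\vn$ and the symmetry $\Euler_Q(\valpha,\vbeta)=\Euler_{Q^*}(\vbeta,\valpha)$. Your write-up just spells out the substitutions that the paper compresses into a single sentence.
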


We will use the latter theorem in the following.

\section{Subrepresentations and Semi-invariant Polynomials}

Let $\z_Q = \oplus_{x \in Q_0} \R \vdelta_x$ denote the vector space with basis~$\vdelta_x$ for~$x \in Q_0$, and let~$\z_Q^*=\oplus_{x\in Q_0} \R \vdelta^x$ denote its dual space, with dual basis~$\vdelta^x$.
The natural pairing of $\vlambda = \sum_x \lambda_x \vdelta^x \in \z_Q^*$ and $\vsigma = \sum_x \sigma_x \vdelta_x \in \z_Q$ is~given~by
\begin{align*}
  \braket{\vlambda, \vsigma} = \sum_{x \in Q_0} \lambda_x \sigma_x.
\end{align*}
Define the lattices $\Gamma = \oplus_{x \in Q_0} \Z \vdelta_x \subseteq \z_Q$ and $\Lambda = \oplus_{x \in Q_0} \Z \vdelta^x \subseteq \z_Q^*$.
Note that dimension vectors are elements of~$\Gamma$ with nonnegative entries.
The elements of $\Lambda$ are called \emph{weights}.
Given a weight $\vsigma \in \Lambda$, we let~$\CP_\vsigma(\CV)$ denote the space of semi-invariant polynomial functions~$p$ on $\CH_Q(\CV)$ of weight~$\vsigma$, meaning
\begin{equation}\label{eq:def semi}
p(g r g^{-1})=\prod_{x\in Q_0} \det(g_x)^{-\sigma_x} p(r)
\qquad (\forall g\in \GL_Q(\CV), r\in \CH_Q(\CV)).
\end{equation}
Our convention differs by a sign from the definition in~\cite{MR1758750}, in order to be consistent with the general definition of an isotypical subspace.
The following lemma was proved by King~\cite{MR1315461}.

\begin{definition}\label{def:Kingcone}
Define $\Sigma_{Q,\geo}(\vn) \subset \z_Q^*$ as the rational polyhedral cone consisting of all elements~$\vsigma\in \z_Q^*$ such that
\begin{align}\label{eq:lambda eq}
  \langle\vsigma,\vn\rangle = 0,
\end{align}
and
\begin{align}\label{eq:lambda ieq}
  \langle\vsigma, \vbeta\rangle \geq 0
\end{align}
for every dimension vector~$\vbeta$ such that $\vn\twoheadrightarrow_Q \vbeta$ (equivalently, $\vbeta\hookrightarrow_{Q^*}\vn$).
\end{definition}

\begin{lemma}[King]\label{lem:king}
Assume there exists a nonzero semi-invariant polynomial on $\CH_Q(\CV)$ of weight~$\vsigma$, and let $\vn = \dim \CV$.
Then, $\vsigma \in \Sigma_{Q,\geo}(\vn)$.
\end{lemma}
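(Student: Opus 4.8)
The plan is to run a concrete one-parameter-subgroup degeneration --- a hands-on special case of the Hilbert--Mumford criterion --- which turns the existence of a generic subrepresentation of a prescribed dimension into a sign condition on $\langle\vsigma,\cdot\rangle$. The linear condition \eqref{eq:lambda eq} comes for free: inserting the scalar subgroup $g=(t\,\id_{V_x})_{x\in Q_0}$ with $t\in\C^\times$ into \eqref{eq:def semi} and noting that it acts trivially ($grg^{-1}=r$) gives $p(r)=t^{-\langle\vsigma,\vn\rangle}p(r)$ for all $t$; since $p\not\equiv 0$ we may choose $r$ with $p(r)\neq 0$, which forces $\langle\vsigma,\vn\rangle=0$.

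For the inequalities \eqref{eq:lambda ieq}, fix a dimension vector $\vbeta$ with $\vn\twoheadrightarrow_Q\vbeta$ and set $\valpha=\vn-\vbeta$ (a dimension vector, since $\vbeta\leq\vn$ componentwise); as $\vn=\valpha+\vbeta$, the equivalence recalled in \cref{sec:repQ} yields $\valpha\hookrightarrow_Q\vn$. Pick $r\in\CH_Q(\CV)$ with $p(r)\neq 0$. By $\valpha\hookrightarrow_Q\vn$ this $r$ has a subrepresentation $\CS=(S_x)_{x\in Q_0}$ with $\dim S_x=\alpha_x$; choose for each $x$ a basis of $V_x$ whose first $\alpha_x$ vectors span $S_x$, write $V_x=S_x\oplus T_x$ for the resulting splitting, and let $\lambda$ be the one-parameter subgroup of $\GL_Q(\CV)$ with $\lambda(t)_x$ equal to multiplication by $t$ on $S_x$ and to the identity on $T_x$.

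Next I would verify that $t\mapsto\lambda(t)\,r\,\lambda(t)^{-1}$ extends to a polynomial (regular) map $\C\to\CH_Q(\CV)$. Since $r_a(S_x)\subseteq S_y$ for every arrow $a:x\to y$, in the chosen bases the matrix of $r_a$ is block upper-triangular with respect to $V_x=S_x\oplus T_x$ and $V_y=S_y\oplus T_y$ (its $\Hom(S_x,T_y)$-block vanishes), so conjugating by $\lambda(t)$ multiplies only the $\Hom(T_x,S_y)$-block by $t$ and leaves the diagonal blocks unchanged; the value at $t=0$ is the block-diagonal representation determined by $\CS$ and the quotient $\CV/\CS$. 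Hence $q(t)\coloneqq p(\lambda(t)\,r\,\lambda(t)^{-1})$ is a polynomial in $t$. On the other hand, \eqref{eq:def semi} together with $\det(\lambda(t)_x)=t^{\alpha_x}$ gives $q(t)=\prod_{x\in Q_0}t^{-\alpha_x\sigma_x}\,p(r)=t^{-\langle\vsigma,\valpha\rangle}\,p(r)$ for $t\neq 0$, hence identically. As $p(r)\neq 0$ and $q$ has no pole at $0$, we must have $-\langle\vsigma,\valpha\rangle\geq 0$; combined with $\langle\vsigma,\vn\rangle=0$ this gives $\langle\vsigma,\vbeta\rangle=-\langle\vsigma,\valpha\rangle\geq 0$, which is \eqref{eq:lambda ieq}.

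I expect the one step demanding genuine care to be the regularity of $t\mapsto\lambda(t)\,r\,\lambda(t)^{-1}$ at $t=0$ --- equivalently, that $q$ carries no negative power of $t$ --- because this is precisely where the hypothesis $\vn\twoheadrightarrow_Q\vbeta$ enters: it supplies the subrepresentation $\CS$ of dimension $\valpha$, and the $r$-invariance of $\CS$ is what makes the conjugation polynomial rather than Laurent in $t$. The remaining manipulations (the scalar-subgroup argument and the determinant bookkeeping) are routine.
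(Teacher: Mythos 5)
Your proposal is correct and is essentially the paper's own argument: the scalar subgroup gives \eqref{eq:lambda eq}, and the one-parameter subgroup scaling the subrepresentation $\CS$ of dimension $\valpha\hookrightarrow_Q\vn$ gives $\langle\vsigma,\valpha\rangle\leq 0$ via the regularity of the degeneration at the boundary (the paper writes the same subgroup multiplicatively as $e^t$ and lets $t\to-\infty$ instead of $t\to 0$). The key point you flag --- that $r$-invariance of $\CS$ kills the $\Hom(S_x,T_y)$-block and hence the negative powers of $t$ --- is exactly the paper's observation that $g(t)rg(t)^{-1}$ has a limit.
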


Define $\valpha \coloneqq \vn-\vbeta$.
Using~\eqref{eq:lambda eq}, we see that the inequalities~\eqref{eq:lambda ieq} are equivalent to
\begin{align}\label{eq:lambda ieq sub}
  \langle\vsigma, \valpha\rangle \leq 0
\end{align}
for all $\valpha$ such that $\valpha \hookrightarrow_Q \vn$ (which is equivalent to $\vn \twoheadrightarrow_Q \vbeta$).
We recall the beautiful proof of King's lemma.
\begin{proof}
Condition~\eqref{eq:lambda eq} follows by considering the one-parameter subgroup $(e^t  {\rm\Id}_{V_x})_{x\in Q_0}$ of $GL_Q(\CV)$, which acts trivially on $\CH_Q(\CV)$.
To establish~\eqref{eq:lambda ieq} we will prove the equivalent condition~\eqref{eq:lambda ieq sub}.
Fix families $\CS, \CW$ of vector spaces with $\dim\CS = \valpha$ and $\dim\CW=\vbeta$ such that~$\CV = \CS \op \CW$.
Let $\valpha\hookrightarrow_Q \vn$.
This means that, under the action of $GL_Q(\CV)$, any element of $\CH_Q(\CV)$ can be transformed to an element $r=(r_a)_{a\in Q_1}$ with $r_a$ of the form
\begin{equation}\label{eq:ra}
r_a =\begin{bmatrix}
       r_{11} & r_{12} \\
       0 & r_{22} \\
     \end{bmatrix},
  \end{equation}
with respect to $V_x=S_x\oplus W_x$ and $V_y=S_y\oplus W_y$.
We will say that such an  $r$ is of \emph{parabolic type~$\valpha$}.
Thus, if the semi-invariant polynomial~$p$ is nonzero, there is some $r =(r_a)_{a\in Q_1}$  of parabolic type $\valpha$ with~$p(r)\neq 0$.
For $t\in\R$, consider $g(t)=(g_x(t)) \in \GL_Q(\CV)$ given by
\[ g_x(t)=\begin{bmatrix}
       e^t & 0 \\
       0 & 1\\
\end{bmatrix} \]
with respect to $V_x = S_x \oplus W_x$.
Because $\det g_x(t)=e^{t\alpha_x}$, the semi-invariance condition~\eqref{eq:def semi} implies that
\[ p(g(t) r g(t)^{-1}) = e^{-t \langle \vsigma, \valpha \rangle} p(r). \]
On the other hand,
\[ g(t)r g(t)^{-1}= \begin{bmatrix}
       r_{11} & e^t r_{12} \\
       0 & r_{22} \\
     \end{bmatrix} \]
has a limit when $t\to -\infty$.
Thus, necessarily $\langle \vsigma, \valpha \rangle \leq 0$.
\end{proof}

The following observation is a special case of the important \cref{lem:main} below.

\begin{lemma}\label{lem:min}
Let $\vsigma \in \Sigma_{Q,\geo}(\vn)$.
Let $x \in Q_0$ be a vertex that has no incoming arrows.
If $n_x > 0$, then $\sigma_x \geq 0$.
\end{lemma}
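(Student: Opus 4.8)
The plan is to recognize $\vdelta_x$ (the dimension vector with a $1$ at the vertex $x$ and $0$ elsewhere) as a Schofield quotient dimension vector of $\vn$, and then simply apply the defining inequality~\eqref{eq:lambda ieq} of the cone $\Sigma_{Q,\geo}(\vn)$ with $\vbeta = \vdelta_x$. Since $\langle\vsigma,\vdelta_x\rangle = \sigma_x$, this gives exactly $\sigma_x \geq 0$. So the whole content is the claim $\vn \twoheadrightarrow_Q \vdelta_x$, equivalently $\valpha := \vn - \vdelta_x \hookrightarrow_Q \vn$ (note $\vn = \valpha + \vdelta_x$, and $n_x > 0$ ensures $\valpha$ and $\vdelta_x$ are genuine dimension vectors).

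To verify $\vn - \vdelta_x \hookrightarrow_Q \vn$, I would exhibit, for an arbitrary representation $r \in \CH_Q(\CV)$ with $\dim\CV = \vn$, a subrepresentation of dimension $\vn - \vdelta_x$. Since $n_x > 0$, pick any hyperplane $S_x \subseteq V_x$ (so $\dim S_x = n_x - 1$) and set $S_y = V_y$ for all $y \neq x$. Then $\CS = (S_z)_{z\in Q_0}$ is a subrepresentation of $r$: for an arrow $a: z\to w$ in $Q_1$ we must check $r_a(S_z)\subseteq S_w$, and this holds automatically whenever $w\neq x$ because then $S_w = V_w$; the remaining case $w = x$ never occurs precisely because $x$ has no incoming arrows. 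Hence $\Gr_Q(\vn-\vdelta_x,\CV)_r \neq \emptyset$ for every $r$, which is the definition of $\vn-\vdelta_x \hookrightarrow_Q \vn$. (Alternatively, one could deduce this from Schofield's criterion \cref{theo:crit}, but the explicit construction is more transparent.)

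Finally I would assemble the conclusion either directly from~\eqref{eq:lambda ieq} applied to $\vbeta = \vdelta_x$, or in the equivalent ``subrepresentation'' form: from $\vn - \vdelta_x \hookrightarrow_Q \vn$ and~\eqref{eq:lambda ieq sub} we get $\langle\vsigma,\vn-\vdelta_x\rangle \leq 0$, and combining with the homogeneity relation $\langle\vsigma,\vn\rangle = 0$ from~\eqref{eq:lambda eq} yields $\sigma_x \geq 0$. There is essentially no obstacle in this argument: the only points requiring the hypotheses are that a codimension-one subspace of $V_x$ exists (which uses $n_x > 0$) and that the naive choice $S_y = V_y$ for $y\neq x$ is compatible with all arrows (which uses that $x$ has no incoming arrows) — and it is exactly the need to control incoming arrows at $x$ that makes this only a special case of \cref{lem:main}.
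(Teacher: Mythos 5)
Your argument is correct and coincides with the paper's proof: both construct the subrepresentation $\CS$ by taking a hyperplane at $x$ and the full spaces elsewhere, use the absence of incoming arrows at $x$ to verify $r(\CS)\subseteq\CS$, deduce $\vn\twoheadrightarrow_Q\vdelta_x$, and conclude via the defining inequality of $\Sigma_{Q,\geo}(\vn)$.
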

\begin{proof}
  Let $\CV$ be a family of vector spaces of dimension vector~$\vn$.
  Pick~any subspace~$S_x \subseteq V_x$ of codimension~1, and let $S_y \coloneqq  V_y$ for~$y \neq x$.
  Since~$x$ has no incoming arrows, $\CS$ is a subrepresentation of any representation~$r \in \CH_Q(\CV)$.
  In other words, for~$\valpha \coloneqq \dim \CS$ it holds that~$\valpha \hookrightarrow_Q \vn$.
  Therefore $\vn \twoheadrightarrow_Q \vbeta \coloneqq \vn - \valpha = \vdelta_x$.
  Since~$\vsigma \in \Sigma_{Q,\geo}(\vn)$, we conclude that $\sigma_x = \langle\vsigma, \vdelta_x \rangle \geq 0$.
\end{proof}

We recall the well-known construction of semi-invariant polynomials on~$\CH_Q(\CV)$.
Since the quiver~$Q$ has no cycles, there are isomorphisms~$L_1,L_2: \z_Q\to \z_Q^*$ such that
for every $\vxi\in \z_Q$ and $\valpha,\vbeta\in \z_Q$:
\begin{align*}
  \Euler_Q(\valpha,\vxi)&=\langle L_1(\valpha),\vxi \rangle, \\
  \Euler_Q(\vxi,\vbeta) &=\langle L_2(\vbeta),\vxi \rangle.
\end{align*}
The maps $L_1,L_2$ define isomorphisms between the lattices~$\Gamma \subset \z_Q$ and~$\Lambda \subset \z_Q^*$.
In particular, if $\valpha,\vbeta$ are dimension vectors, then $L_1(\valpha), L_2(\vbeta)$ are in the weight lattice~$\Lambda$.

We now change notation since in order to construct semi-invariant polynomials on $\CH_Q(\CV)$
we will need to realize $\CV$ as a quotient of a larger family of vector spaces $\CE$.
Let ${\vA}$, ${\vB}$ be two dimension vectors.
Pick~$\CF=(F_x)_{x\in Q_0}$ and $\CG=(G_x)_{x\in Q_0}$
such that $\dim \CF={\vA}$ and $\dim \CG={\vB}$, and define
\begin{align*}
 \CH_Q(\CF,\CG) &\coloneqq \!\!\!\!\!\bigoplus_{a:x\to y \in Q_1}\!\!\!\!\! \Hom(F_x,G_y), \\
  \g_Q(\CF,\CG) &\coloneqq \bigoplus_{x\in Q_0} \Hom(F_x,G_x).
\end{align*}
Then, $\Euler_Q({\vA},{\vB}) = \dim \g_Q(\CF,\CG)-\dim\CH_Q(\CF,\CG)$.
Thus the two spaces have the same dimension if~$\Euler_Q({\vA},{\vB})=0$.
For~$v\in \CH_Q(\CF)$ and~$w\in \CH_Q(\CG)$, consider the map
\begin{align*}
  \delta_{v,w}\colon \g_{Q}(\CF,\CG)\to \CH_Q(\CF,\CG), \quad \Phi \mapsto \Phi v - w\Phi,
\end{align*}
where $\Phi v - w\Phi$ is the element of $\CH_Q(\CF,\CG)$ given by
$(\Phi v - w\Phi)_{a : x \to y \in Q_1} =
  \Phi_y v_a - w_a \Phi_x 
$.
Finally, let $\CE=(E_x)_{x\in Q_0}$ be such that $\dim \CE=\vA+\vB$.
Then we have the following lemma due to Schofield~\cite{MR1162487}.

\begin{lemma}[Schofield]\label{lem:quo}
Let ${\vA},{\vB}$ be dimension vectors and~$\vN={\vA}+{\vB}$.
Assume $\Euler_Q(\vA,\vB)=0$.
Then, ${\vA}\hookrightarrow_Q \vN$ (equivalently, ${\vN}\twoheadrightarrow_Q \vB$)
if and only if
the map $\delta_{v,w}$ is an isomorphism for generic~$(v,w)$ in $\CH_Q(\CF)\oplus \CH_Q(\CG)$.
\end{lemma}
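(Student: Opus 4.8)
The plan is to recast the invertibility of $\delta_{v,w}$ as the vanishing of a space of homomorphisms of quiver representations, and then to recognise the condition $\vA\hookrightarrow_Q\vN$ as the dominance of the first projection from a natural incidence variety onto $\CH_Q(\CE)$; the hypothesis $\Euler_Q(\vA,\vB)=0$ is what will make both source and target of $\delta_{v,w}$, and separately that incidence variety and its target, equidimensional. First I would analyse $\delta_{v,w}$ itself. Writing $M=(\CF,v)$, $N=(\CG,w)$ for the quiver representations attached to $v\in\CH_Q(\CF)$ and $w\in\CH_Q(\CG)$, a family $\Phi=(\Phi_x)_{x\in Q_0}\in\g_Q(\CF,\CG)$ lies in $\ker\delta_{v,w}$ exactly when $\Phi_y v_a=w_a\Phi_x$ for every arrow $a\colon x\to y$, i.e.\ exactly when $\Phi$ is a morphism $M\to N$; thus $\ker\delta_{v,w}=\Hom_Q(M,N)$ (and, via the Ringel exact sequence, $\coker\delta_{v,w}=\Ext_Q^1(M,N)$, which is not needed). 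Since $\dim\g_Q(\CF,\CG)-\dim\CH_Q(\CF,\CG)=\Euler_Q(\vA,\vB)=0$, the source and target of $\delta_{v,w}$ have the same dimension, so $\delta_{v,w}$ is an isomorphism if and only if it is injective if and only if $\Hom_Q(M,N)=0$; and being an isomorphism is a Zariski-open condition on $(v,w)$ (nonvanishing of a determinant), so it holds for generic $(v,w)$ if and only if it holds for at least one pair $(v,w)$.

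Next I would introduce the incidence variety. Since $\vA\hookrightarrow_Q\vN$ depends only on the dimension vectors I may take $\CE=\CF\oplus\CG$, and I set $Z\coloneqq\{(r,\CS):r\in\CH_Q(\CE),\ \CS\in\Gr_Q(\vA,\CE),\ r\CS\subseteq\CS\}$, with projections $\pi_1\colon Z\to\CH_Q(\CE)$ and $\pi_2\colon Z\to\Gr_Q(\vA,\CE)$. For fixed $\CS$, the fibre $\pi_2^{-1}(\CS)$ is the kernel of the surjective linear map $\CH_Q(\CE)\to\bigoplus_{a\colon x\to y}\Hom(S_x,E_y/S_y)$ that records the maps $S_x\hookrightarrow E_x\xrightarrow{r_a}E_y\twoheadrightarrow E_y/S_y$, hence a linear subspace of codimension $\sum_{a\colon x\to y}A_xB_y$; these fibres glue to a sub-vector-bundle of the trivial bundle $\Gr_Q(\vA,\CE)\times\CH_Q(\CE)$, so $Z$ is smooth and irreducible, and $\dim Z=\dim\Gr_Q(\vA,\CE)+\dim\CH_Q(\CE)-\sum_{a\colon x\to y}A_xB_y=\dim\CH_Q(\CE)+\Euler_Q(\vA,\vB)=\dim\CH_Q(\CE)$, using $\dim\Gr_Q(\vA,\CE)=\sum_x A_xB_x$. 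By definition $\vA\hookrightarrow_Q\vN$ means $\Gr_Q(\vA,\CE)_r\ne\emptyset$ for every $r$, and since it suffices to test this on a Zariski-open set of $r$, the condition $\vA\hookrightarrow_Q\vN$ is equivalent to $\pi_1$ being dominant.

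Then I would compute the differential of $\pi_1$ and conclude. For $(r,\CS)\in Z$ the scheme-theoretic fibre $\pi_1^{-1}(r)$ is the quiver Grassmannian $\Gr_Q(\vA,\CE)_r$, and a first-order deformation computation identifies its tangent space at $\CS$ with $\Hom_Q\bigl((\CS,r|_\CS),(\CE/\CS,\bar r)\bigr)$: a tangent vector to $\Gr_Q(\vA,\CE)$ at $\CS$ is a family $\phi=(\phi_x)_x$ with each $\phi_x\in\Hom(S_x,E_x/S_x)$, and the deformed subspaces are $r$-invariant to first order precisely when $\bar r_a\circ\phi_x=\phi_y\circ(r|_\CS)_a$ for all arrows $a\colon x\to y$, i.e.\ precisely when $(\phi_x)_x$ is a morphism $(\CS,r|_\CS)\to(\CE/\CS,\bar r)$. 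Since $\ker(d\pi_1)_{(r,\CS)}$ equals this tangent space and $\dim Z=\dim\CH_Q(\CE)$, the differential $d\pi_1$ is surjective at $(r,\CS)$ if and only if $\Hom_Q(\CS,\CE/\CS)=0$; identifying $\CS\cong\CF$, $\CE/\CS\cong\CG$ and transporting $r|_\CS,\bar r$ to a pair $v\in\CH_Q(\CF)$, $w\in\CH_Q(\CG)$, this says exactly that $\delta_{v,w}$ is an isomorphism, by the first step. The two implications now follow. If $\delta_{v,w}$ is an isomorphism for generic $(v,w)$, pick such a pair and take $r=v\oplus w$ (the direct-sum representation on $\CE=\CF\oplus\CG$) and $\CS=\CF\oplus 0$; then $d\pi_1$ is surjective at $(r,\CS)$, so, $Z$ being irreducible with $\dim Z=\dim\CH_Q(\CE)$, the morphism $\pi_1$ is dominant and $\vA\hookrightarrow_Q\vN$. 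Conversely, if $\vA\hookrightarrow_Q\vN$ then $\pi_1$ is a dominant morphism from the smooth irreducible $Z$ to $\CH_Q(\CE)$, so by generic smoothness (we work over $\C$) $d\pi_1$ is surjective at some point $(r,\CS)$; hence $\Hom_Q(\CS,\CE/\CS)=0$, and transporting as above $\delta_{v_0,w_0}$ is an isomorphism for some $(v_0,w_0)$, hence for generic $(v,w)$ by the openness in the first step. The equivalence with $\vN\twoheadrightarrow_Q\vB$ is the one already recorded above.

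I expect the main obstacle to be the first-order deformation computation in the last step: identifying $T_\CS\Gr_Q(\vA,\CE)_r$ with $\Hom_Q(\CS,\CE/\CS)$ and checking that, under the chosen identifications $\CS\cong\CF$ and $\CE/\CS\cong\CG$, this kernel is literally $\ker\delta_{v,w}$. The remaining ingredients are routine: the dimension bookkeeping relating $\dim Z$ to $\Euler_Q(\vA,\vB)$, and the characteristic-zero fact that a dominant morphism from a smooth irreducible variety to a variety of the same dimension is generically smooth (which is also what underlies the dimension formula for quiver Grassmannians recalled above).
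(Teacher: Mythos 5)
Your proof is correct and is essentially the argument the paper has in mind: the paper's one-line justification (that $(g,r)\mapsto grg^{-1}$ with $r$ of parabolic type $\vA$ surjects onto $\CH_Q(\CE)$ iff $\vA\hookrightarrow_Q\vN$) is exactly your dominance criterion for $\pi_1$, presented via the group sweep of $\pi_2^{-1}(\CS)$ instead of the incidence variety over $\Gr_Q(\vA,\CE)$, and the differential of that orbit map reduces to $\delta_{v,w}$ just as your tangent-space computation for the quiver Grassmannian fibre does. Your write-up fills in the details the paper omits, and the key identifications ($\ker\delta_{v,w}=\Hom_Q(M,N)$, the dimension count $\dim Z=\dim\CH_Q(\CE)$ via $\Euler_Q(\vA,\vB)=0$, and generic smoothness in characteristic zero) are all handled correctly.
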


The lemma follows easily from the fact that the map~$(g,r)\mapsto g r g^{-1}$, where $g \in \GL_Q(\CE)$ and $r$ varies in the set of representations of parabolic type~$\vA$ (a notion defined in the proof of \cref{lem:king}), surjects onto~$\CH_Q(\CE)$ if (and only if) $\vA\hookrightarrow_Q \vN$.

\begin{definition}[Determinantal semi-invariants]\label{def:cvw}
Let ${\vA},{\vB}$ be dimension vectors with
$\Euler_Q(\vA,\vB)=0$.
Let~$\CF,\CG$  with $\dim \CF=\vA$ and $\dim \CG=\vB$.
Then we can define the polynomial function
\begin{align*}
  C^{{\vA},{\vB}} \colon \CH_Q(\CF)\oplus \CH_Q(\CG) \to \C, \qquad
  C^{{\vA},{\vB}}(v,w) \coloneqq \det(\delta_{v,w}),
\end{align*}
for an arbitrary choice of bases of the spaces $\g_{Q}(\CF,\CG)$ and $\CH_Q(\CF,\CG)$.
Different choices of bases only change the function by a nonzero scalar multiple.
\end{definition}

The following proposition is due to Schofield-Van den Bergh~\cite{MR1908144}, and Derksen-Weyman~\cite{MR1758750}.

\begin{proposition}\label{pro:deter}
For all $g\in \GL_Q(\vA),$ $h\in \GL_Q(\vB)$,
$v\in \CH_Q(\CF)$, and $w\in \CH_Q(\CG)$, one has
\[ C^{{\vA},{\vB}}(gvg^{-1},h w h^{-1})=\det(g)^{L_2(\vB)} \det(h)^{-L_1(\vA)} C^{{\vA},{\vB}}(v, w). \]
\end{proposition}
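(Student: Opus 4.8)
The plan is to compute directly how the operator $\delta_{v,w}$ transforms under the action of the two groups, and then take determinants. First I would observe that $\GL_Q(\CF)$ acts on $\CH_Q(\CF)$ by $v \mapsto gvg^{-1}$, that $\GL_Q(\CG)$ acts on $\CH_Q(\CG)$ by $w\mapsto hwh^{-1}$, and that both groups act naturally on the spaces $\g_Q(\CF,\CG)=\bigoplus_x \Hom(F_x,G_x)$ and $\CH_Q(\CF,\CG)=\bigoplus_{a:x\to y}\Hom(F_x,G_y)$: namely $g\in\GL_Q(\CF)$ acts on $\Phi$ by precomposition, $\Phi\mapsto \Phi g^{-1}$ (componentwise $\Phi_x\mapsto \Phi_x g_x^{-1}$), while $h\in\GL_Q(\CG)$ acts by postcomposition, $\Phi\mapsto h\Phi$. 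The key algebraic identity to verify is the intertwining relation
\begin{align*}
  \delta_{gvg^{-1},\,hwh^{-1}}(h\Phi g^{-1}) = h\,\delta_{v,w}(\Phi)\,g^{-1},
\end{align*}
which one checks componentwise: on the arrow $a:x\to y$ the left side is $h_y\Phi_y g_y^{-1}\cdot g_y v_a g_x^{-1} - h_y w_a h_x^{-1}\cdot h_x\Phi_x g_x^{-1} = h_y(\Phi_y v_a - w_a\Phi_x)g_x^{-1}$, which is exactly $h_y$ times the $a$-component of $\delta_{v,w}(\Phi)$ times $g_x^{-1}$. Thus $\delta_{gvg^{-1},hwh^{-1}} = R_{h,g}\circ \delta_{v,w}\circ R_{h,g}^{-1}$, where I write $R_{h,g}$ for the linear map $\Phi\mapsto h\Phi g^{-1}$, viewed both as an automorphism of $\g_Q(\CF,\CG)$ and of $\CH_Q(\CF,\CG)$ (in the appropriate incarnations).

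Next I would take determinants. Since $C^{\vA,\vB}(v,w)=\det(\delta_{v,w})$ with respect to fixed bases, the intertwining relation gives
\begin{align*}
  C^{\vA,\vB}(gvg^{-1},hwh^{-1})
  = \frac{\det\!\big(R_{h,g}|_{\CH_Q(\CF,\CG)}\big)}{\det\!\big(R_{h,g}|_{\g_Q(\CF,\CG)}\big)}\; C^{\vA,\vB}(v,w).
\end{align*}
So it remains to compute the determinant of the "conjugation" operator $\Phi\mapsto h\Phi g^{-1}$ on a space of the form $\bigoplus_{(x,y)\in S}\Hom(F_x,G_y)$ for a suitable index set $S$. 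On a single block $\Hom(F_x,G_y)$, the operator $\Psi\mapsto h_y\Psi g_x^{-1}$ has determinant $\det(h_y)^{\dim F_x}\det(g_x)^{-\dim G_y} = \det(h_y)^{A_x}\det(g_x)^{-B_y}$, by the standard fact that left multiplication by an endomorphism $M$ on $\Hom(U,\,\cdot\,)$-type spaces contributes $\det(M)^{\dim U}$. Applying this to $\g_Q(\CF,\CG)$ (where $S=\{(x,x):x\in Q_0\}$) and to $\CH_Q(\CF,\CG)$ (where $S=\{(x,y): a:x\to y\in Q_1\}$) and taking the ratio, the exponent of $\det(g_x)$ becomes $\sum_{y}(\text{arrows }x\to y)B_y - B_x$, which by definition of the Euler form is $-\Euler_Q(\vdelta_x^{\vee}\text{-type contribution})$; more precisely one gets exactly $\det(g)^{L_2(\vB)}$ after recognizing $\sum_x\big(B_x - \sum_{a:x\to y}B_y\big)\delta^x = \sum_x \Euler_Q(\cdot,\vB)_x\,\delta^x = L_2(\vB)$, and symmetrically the exponent of $\det(h)$ assembles into $-L_1(\vA)$. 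Matching signs carefully with the defining relations $\Euler_Q(\valpha,\vxi)=\langle L_1(\valpha),\vxi\rangle$ and $\Euler_Q(\vxi,\vbeta)=\langle L_2(\vbeta),\vxi\rangle$ yields the claimed formula.

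The main obstacle is purely bookkeeping: getting the two signs right and confirming that the block-wise determinant computation reassembles into precisely $L_2(\vB)$ and $-L_1(\vA)$ rather than their transposes or negatives. It is easy to swap the roles of $g$ and $h$ or of $L_1$ and $L_2$, or to misplace an inverse, since $g$ acts by precomposition on the source $F_x$ while $h$ acts by postcomposition on the target $G_y$; the cleanest way to avoid errors is to do the single-block determinant $\det(\Psi\mapsto h_y\Psi g_x^{-1})=\det(h_y)^{\dim F_x}\det(g_x)^{-\dim G_y}$ once and for all, then sum the exponents over the two index sets and read off the answer against $\Euler_Q(\vA,\vB)=\dim\g_Q(\CF,\CG)-\dim\CH_Q(\CF,\CG)$ (which guarantees the total degrees match and, in the case $\Euler_Q(\vA,\vB)=0$, that $C^{\vA,\vB}$ is genuinely a nonconstant determinant rather than vacuously a scalar).
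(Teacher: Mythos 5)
Your argument is correct, and it is essentially the standard proof (the paper itself does not prove this proposition but attributes it to Schofield--Van den Bergh and Derksen--Weyman; your computation is the one found there). The intertwining relation $\delta_{gvg^{-1},hwh^{-1}}(h\Phi g^{-1})=h\,\delta_{v,w}(\Phi)\,g^{-1}$ and the block determinant $\det(h_y)^{A_x}\det(g_x)^{-B_y}$ are both right, and the ratio of determinants over the two index sets does assemble into $\det(g)^{L_2(\vB)}\det(h)^{-L_1(\vA)}$; the only blemish is a transient sign slip where you first write the exponent of $\det(g_x)$ as $\sum_{a:x\to y}B_y-B_x$ before correctly identifying it as $B_x-\sum_{a:x\to y}B_y=\Euler_Q(\vdelta_x,\vB)=(L_2(\vB))_x$ in the very next clause.
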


By \cref{lem:quo}, $C^{{\vA},{\vB}}$ is a non-zero polynomial if and only if $\vA\hookrightarrow_Q \vN$ or, equivalently, $\vN\twoheadrightarrow_Q\vB$.

\begin{corollary}\label{cor:deter}
Let ${\vA},{\vB}$ be dimension vectors and~$\vN={\vA}+{\vB}$.
Assume that~$\Euler_Q(\vA,\vB)=0$.
Pick $\CF$ and~$\CG$  with $\dim \CF=\vA$, $\dim \CG=\vB$.
Then, ${\vA}\hookrightarrow_Q \vN$ (equivalently, ${\vN}\twoheadrightarrow_Q \vB$)
if and only if $C^{{\vA},{\vB}}(v,\cdot)$ is a nonzero semi-invariant of weight $\vsigma=L_1(\vA)$ on $\CH_Q(\CG)$ for generic~$v\in\CH_Q(\CF)$.
\end{corollary}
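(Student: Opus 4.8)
The plan is to deduce \cref{cor:deter} by combining \cref{pro:deter} with \cref{lem:quo} (together with its consequence, noted just above, that $C^{\vA,\vB}$ is a nonzero polynomial exactly when $\vA\hookrightarrow_Q\vN$), adding only the elementary fact that a nonzero polynomial on a product of vector spaces restricts to a nonzero polynomial on a generic slice.

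First I would record that, for \emph{every} $v\in\CH_Q(\CF)$, the function $w\mapsto C^{\vA,\vB}(v,w)$ is a semi-invariant polynomial on $\CH_Q(\CG)$ of weight $\vsigma=L_1(\vA)$. Indeed, setting $g$ equal to the identity in \cref{pro:deter} yields
\[
  C^{\vA,\vB}(v,hwh^{-1}) = \det(h)^{-L_1(\vA)}\, C^{\vA,\vB}(v,w)\qquad(h\in\GL_Q(\vB)),
\]
which is precisely condition~\eqref{eq:def semi} for $\CH_Q(\CG)$ with weight $L_1(\vA)$; note that $L_1(\vA)\in\Lambda$ since $\vA$ is a dimension vector, and that $C^{\vA,\vB}$ is defined at all only because $\Euler_Q(\vA,\vB)=0$ makes $\delta_{v,w}$ a map between spaces of equal dimension (cf.\ \cref{def:cvw}). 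Hence the real content of the corollary is whether $C^{\vA,\vB}(v,\cdot)$ is \emph{nonzero} for generic $v$.

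For the forward implication, assume $\vA\hookrightarrow_Q\vN$. Then by \cref{lem:quo} the square map $\delta_{v,w}$ is an isomorphism for generic $(v,w)$, so $C^{\vA,\vB}=\det(\delta_{v,w})$ is not the zero polynomial on $\CH_Q(\CF)\op\CH_Q(\CG)$. Expanding $C^{\vA,\vB}$ as a polynomial in the coordinates of $\CH_Q(\CG)$ with coefficients that are polynomials in the coordinates of $\CH_Q(\CF)$, at least one coefficient is a nonzero polynomial in $v$; on the nonempty Zariski-open set of $v$ where it does not vanish, $C^{\vA,\vB}(v,\cdot)$ is a nonzero polynomial, hence by the previous paragraph a nonzero semi-invariant of weight $L_1(\vA)$. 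Conversely, if $C^{\vA,\vB}(v,\cdot)$ is nonzero for generic $v$---in particular for some $v$---then $C^{\vA,\vB}$ is not the zero polynomial, so $\delta_{v,w}$ is generically an isomorphism and \cref{lem:quo} gives $\vA\hookrightarrow_Q\vN$; the equivalence with $\vN\twoheadrightarrow_Q\vB$ was recorded in \cref{sec:repQ}.

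I do not expect a genuine obstacle here: the two points that need attention are that the sign conventions of \cref{pro:deter} and~\eqref{eq:def semi} match up so that the weight comes out as $+L_1(\vA)$ rather than $-L_1(\vA)$, and that the phrase "generic $v$" refers in the two implications to (a priori different) nonempty Zariski-open subsets of $\CH_Q(\CF)$---which is harmless, as a finite intersection of such subsets is again nonempty and open.
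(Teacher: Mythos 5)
Your proof is correct and is exactly the argument the paper intends: the corollary is deduced from \cref{lem:quo} and \cref{pro:deter} (with $g=\mathrm{id}$ giving the weight $+L_1(\vA)$ under the sign convention of \eqref{eq:def semi}), plus the standard observation that a nonzero polynomial on $\CH_Q(\CF)\oplus\CH_Q(\CG)$ restricts to a nonzero polynomial on $\{v\}\times\CH_Q(\CG)$ for $v$ in a nonempty Zariski-open set. The paper leaves this as an immediate consequence and gives no separate proof, so your write-up simply makes the same reasoning explicit.
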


\section{Derksen-Weyman saturation theorem}
In this section, we give our proof of the saturation theorem of Derksen-Weyman~\cite{MR1758750}.
The theorem states that the following semigroup is satured and gives a concrete description in terms of linear inequalities.

\begin{definition}
Let $\CV$ be a family of vector spaces with dimension vector~$\vn$.
Define $\Sigma_{\rep,Q}(\vn) \subset \Lambda$ as the semigroup consisting of the weights $\vsigma$ such that there exists a non-zero semi-invariant polynomial of weight $\vsigma$ on $\CH_Q(\CV)$.
\end{definition}

\begin{theorem}\label{theo:sat}
$\Sigma_{\rep,Q}(\vn)=\Sigma_{\geo,Q}(\vn)\cap \Lambda$.
\end{theorem}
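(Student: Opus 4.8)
The plan is to prove the two inclusions separately. The inclusion $\Sigma_{\rep,Q}(\vn) \subseteq \Sigma_{\geo,Q}(\vn)\cap\Lambda$ is immediate: weights of semi-invariants lie in $\Lambda$ by definition, and \cref{lem:king} (King) states exactly that any weight carrying a nonzero semi-invariant lies in the geometric cone $\Sigma_{\geo,Q}(\vn)$.

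The content is in the reverse inclusion. Fix $\vsigma\in\Sigma_{\geo,Q}(\vn)\cap\Lambda$. First I would reduce to the case $n_x>0$ for every $x\in Q_0$: at a vertex with $n_x=0$ the factor $\GL(V_x)$ is trivial, so $\sigma_x$ appears in neither the semi-invariance condition \eqref{eq:def semi} nor in \eqref{eq:lambda eq}--\eqref{eq:lambda ieq}, and deleting such vertices (and their arrows, which contribute $\Hom$-spaces equal to $0$) changes neither semigroup. Next set $\vA\coloneqq L_1^{-1}(\vsigma)\in\Gamma$; by \cref{lem:main}, the announced generalization of \cref{lem:min}, $\vA$ is a genuine dimension vector. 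Put $\vB\coloneqq\vn$, $\vN\coloneqq\vA+\vn$, and choose $\CF,\CG$ with $\dim\CF=\vA$ and $\CG=\CV$. Since $\langle L_1(\valpha),\vxi\rangle=\Euler_Q(\valpha,\vxi)$ for all $\vxi$, condition \eqref{eq:lambda eq} gives $\Euler_Q(\vA,\vn)=\langle L_1(\vA),\vn\rangle=\langle\vsigma,\vn\rangle=0$, so the determinantal construction of \cref{def:cvw} applies to the pair $(\vA,\vn)$.

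The decisive step is to verify $\vA\hookrightarrow_Q\vN$ (equivalently $\vN\twoheadrightarrow_Q\vn$). I would apply Schofield's \cref{theo:crit} with its triple $(\valpha,\vbeta,\vn)$ specialized to $(\vA,\vn,\vN)$: it asserts $\vN\twoheadrightarrow_Q\vn$ if and only if $\Euler_Q(\vA,\vgamma)\geq 0$ for every $\vgamma$ with $\vn\twoheadrightarrow_Q\vgamma$. But $\Euler_Q(\vA,\vgamma)=\langle L_1(\vA),\vgamma\rangle=\langle\vsigma,\vgamma\rangle$, and this is $\geq 0$ by \eqref{eq:lambda ieq}, which is precisely the defining inequality of $\Sigma_{\geo,Q}(\vn)$ for such $\vgamma$. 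This is the observation that King's inequalities and Schofield's inequalities coincide (\cref{pro:king vs schofield}). Having established $\vA\hookrightarrow_Q\vN$, \cref{cor:deter} (applied with $\CG=\CV$) shows that for generic $v\in\CH_Q(\CF)$ the function $C^{\vA,\vn}(v,\cdot)$ is a nonzero semi-invariant on $\CH_Q(\CV)$ of weight $L_1(\vA)=\vsigma$; hence $\vsigma\in\Sigma_{\rep,Q}(\vn)$, which finishes the proof.

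The only genuinely non-formal ingredients are Schofield's characterization \cref{theo:crit}, the determinantal semi-invariants of \cref{cor:deter}, and \cref{lem:main}. The main obstacle is making sure that $\vA\coloneqq L_1^{-1}(\vsigma)$ is an honest dimension vector, i.e.\ the positivity content of \cref{lem:main}, which is obtained by an inductive argument over the vertices of the acyclic quiver extending the one-line argument of \cref{lem:min}; once this is in place, the identification of King's inequalities with Schofield's inequalities, and hence the whole construction, is purely formal.
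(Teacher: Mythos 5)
Your proposal is correct and follows essentially the same route as the paper: reduce to $n_x>0$, set $\vA=L_1^{-1}(\vsigma)$, use \cref{lem:main} to see it is a dimension vector, match King's inequalities with Schofield's criterion (\cref{theo:crit}) to get $\vA+\vn\twoheadrightarrow_Q\vn$, and conclude via the determinantal semi-invariant of \cref{cor:deter}. This is exactly the paper's argument, with \cref{lem:main} correctly identified as the one non-formal ingredient.
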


To prove the theorem, we can assume without loss of generality that $n_x > 0$ for all $x \in Q_0$.
Indeed, if we denote by $Q'$ the quiver obtained by removing all vertices~$x$ with~$n_x = 0$, and by $\vn', \vsigma'$ the dimension vector and weight with all corresponding entries removed, then it is immediate that $\vsigma \in \Sigma_{\rep,Q}(\vn)$ if and only if $\vsigma' \in \Sigma_{\rep,Q'}(\vn')$, and likewise for $\Sigma_{\geo,Q}(\vn)$.

The main ingredient in our proof is the observation that if a weight~$\vsigma$ satisfies King's inequalities for~$\vn$ (\cref{def:Kingcone}), $\vA \coloneqq L_1^{-1}(\vsigma)$ is a dimension vector that satisfies Schofield's inequalities for~$\vA + \vn \twoheadrightarrow_Q \vn$ (\cref{theo:crit}).
Recall that $\vA = L_1^{-1}(\vsigma)$ means that $\Euler_Q(\vA,\vxi) = \langle \vsigma,\vxi \rangle$ for all $\vxi\in \z_Q$.

\begin{proposition}\label{pro:king vs schofield}
Let $\vsigma\in \Sigma_{Q,\geo}(\vn) \cap \Lambda$.
Then, $\vA \coloneqq L_1^{-1}(\vsigma)$ is a dimension vector and it holds that~$\Euler_Q(\vA, \vgamma) \geq 0$ for every dimension vector~$\vgamma$ such that $\vn \twoheadrightarrow_Q \vgamma$.
Moreover, it holds that~$\Euler_Q(\vA, \vn) = 0$.
\end{proposition}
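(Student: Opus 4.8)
The plan is to translate each of the two claims—the Schofield-type inequality $\Euler_Q(\vA,\vgamma)\ge 0$ and the equality $\Euler_Q(\vA,\vn)=0$—into a statement about the pairing $\langle\vsigma,\cdot\rangle$ using the defining property of $L_1$, and then invoke King's inequalities (\cref{def:Kingcone}) verbatim. Concretely, $\vA = L_1^{-1}(\vsigma)$ means $\Euler_Q(\vA,\vxi)=\langle\vsigma,\vxi\rangle$ for all $\vxi\in\z_Q$. Hence $\Euler_Q(\vA,\vgamma)=\langle\vsigma,\vgamma\rangle$, and for any dimension vector $\vgamma$ with $\vn\twoheadrightarrow_Q\vgamma$ the inequality $\langle\vsigma,\vgamma\rangle\ge 0$ is exactly \eqref{eq:lambda ieq}, which holds since $\vsigma\in\Sigma_{Q,\geo}(\vn)$. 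Similarly $\Euler_Q(\vA,\vn)=\langle\vsigma,\vn\rangle=0$ by \eqref{eq:lambda eq}. So the content of the inequality/equality is essentially a restatement; this part is immediate.

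The one genuine thing to check is that $\vA$ is actually a \emph{dimension vector}, i.e.\ that all its entries are nonnegative integers. Integrality is free: $L_1$ restricts to a lattice isomorphism $\Gamma\to\Lambda$, so $\vsigma\in\Lambda$ forces $\vA=L_1^{-1}(\vsigma)\in\Gamma$. For nonnegativity I would argue entrywise. Fix a vertex $x\in Q_0$; I want $\langle\vdelta^x, \vA\rangle = A_x \ge 0$, equivalently (using the other Euler identity $\Euler_Q(\vxi,\vbeta)=\langle L_2(\vbeta),\vxi\rangle$) to relate $A_x$ to a suitable Euler pairing. The cleanest route: write $A_x = \Euler_Q(\vA,\ ?\ )$ for a well-chosen $\vxi$, but $\vdelta_x$ is not in general dual to $\vdelta^x$ under $L_1$. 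Instead I would use the reduction $n_x>0$ for all $x$ (already justified in the text right before the proposition) together with \cref{lem:min} and, more robustly, \cref{lem:main}—or rather the already-available special case \cref{lem:min}—applied after a suitable reordering. Actually the slickest argument: since $\Euler_Q(\vA,\vn)=0$ and $\Euler_Q(\vA,\vgamma)\ge 0$ for all $\vgamma$ with $\vn\twoheadrightarrow_Q\vgamma$, and since removing from $\vn$ a codimension-one subspace at a sink (a vertex with no outgoing arrows, so that $\vdelta_y\hookrightarrow_{Q^*}\vn$, i.e.\ $\vn\twoheadrightarrow_Q\vdelta_y$) is always a Schofield quotient, we get $A_y=\Euler_Q(\vA,\vdelta_y)\ge 0$ for every sink $y$; for non-sinks one peels off sinks inductively, using that $\vn\twoheadrightarrow_Q\vgamma$ for $\vgamma$ obtained by decrementing coordinates along an "initial segment" in a topological order of $Q$. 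Each such $\vgamma$ gives $\langle\vsigma,\vgamma\rangle\ge 0$; taking differences of consecutive ones in the order isolates each $\sigma_y$-type quantity. Combining with $\langle\vsigma,\vn\rangle=0$ pins down the signs of all entries $A_x$.

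I expect the main obstacle to be exactly this nonnegativity of $\vA$: the inequality and the equality are formal consequences of $L_1$'s defining identity and King's conditions, but verifying $A_x\ge 0$ for \emph{all} vertices—not just sinks—requires the inductive peeling argument through a topological ordering of the acyclic quiver, carefully checking at each step that the dimension vector one subtracts is genuinely a Schofield quotient dimension vector so that King's inequality applies. Once that bookkeeping is done, the proof is complete: $\vA$ is a dimension vector, $\Euler_Q(\vA,\vgamma)=\langle\vsigma,\vgamma\rangle\ge 0$ whenever $\vn\twoheadrightarrow_Q\vgamma$, and $\Euler_Q(\vA,\vn)=\langle\vsigma,\vn\rangle=0$.
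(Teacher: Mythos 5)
The first part of your plan matches the paper exactly: the inequality $\Euler_Q(\vA,\vgamma)\geq 0$ and the equality $\Euler_Q(\vA,\vn)=0$ are indeed verbatim translations of King's conditions via $\Euler_Q(\vA,\cdot)=\langle\vsigma,\cdot\rangle$, and integrality of $\vA$ is free since $L_1$ is a lattice isomorphism. You have also correctly located where all the work lies: proving $A_x\geq 0$ for every vertex. The paper isolates exactly this as \cref{lem:main}, whose proof is the technical heart of the article.

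Your sketch of that step, however, has genuine gaps. First, a direction slip: $\vn\twoheadrightarrow_Q\vdelta_y$ and the identity $A_y=\Euler_Q(\vA,\vdelta_y)=\sigma_y$ both require $y$ to have no \emph{incoming} arrows (a source, as in \cref{lem:min}), not a sink. More seriously, the inductive ``peeling'' does not work as described. (i) The $0/1$ vectors $\vgamma$ supported on down-closed vertex sets are in general \emph{not} Schofield quotient dimension vectors, so the inequalities $\langle\vsigma,\vgamma\rangle\geq 0$ you want to invoke are not among King's conditions: for the quiver with two vertices and three arrows $x_1\to x_2$ and $\vn=(n,n)$, $n\geq 2$, one has $\vn\not\twoheadrightarrow_Q(1,1)$, since a quotient of dimension $(1,1)$ would amount to a common eigenvector of two generic matrices. (ii) Taking differences of valid inequalities does not produce valid inequalities, so nothing is ``isolated'' that way. (iii) One has $A_{x_k}=\sum_{i\leq k}p_{ik}\sigma_{x_i}$ with $p_{ik}$ the number of paths from $x_i$ to $x_k$; these coefficients exceed $1$ as soon as there are multiple paths, so $0/1$ test vectors cannot suffice by themselves. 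The paper instead tests $\vsigma$ against the dimension vectors $\vtheta_k$ of the subrepresentation of a generic $s\in\CH_{Q^*}(\CV^*)$ generated by a single vector at $x_k$; their entries $t_{jk}$ satisfy $t_{jk}\leq p_{jk}$ (with equality only when $\vn$ is large), and the shortfall is controlled by showing that $W=I-T(I-M)$ is strictly lower triangular with nonnegative entries, whence $\vA=W\vA+\sum_k\langle\vsigma,\vtheta_k\rangle\vdelta_{x_k}$ gives $A_{x_k}\geq 0$ by induction. None of this is recoverable from the argument you outline, so the nonnegativity of $\vA$ remains unproved in your proposal.
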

\begin{proof}
In \cref{lem:main} below, we prove that~$\vA$ is a dimension vector.
Now the remaining claims follows immediately from $\Euler_Q(\vA,\cdot) = \langle \vsigma, \cdot \rangle$ and the defining inequalities and equation of $\Sigma_{Q,\geo}(\vn)$.
\end{proof}

It remains to prove the central lemma used in the above proof.

\begin{lemma}\label{lem:main}
If $\vsigma\in \Sigma_{Q,\geo}(\vn) \cap \Lambda$, then $\vA \coloneqq L_1^{-1}(\vsigma)$ is a dimension vector.
That is, $\vA \in \Gamma$ and $A_x \geq 0$ for all $x \in Q_0$.
\end{lemma}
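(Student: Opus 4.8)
The plan is to induct on the number of vertices of $Q$, using the hypothesis (reduced to without loss of generality) that $n_x>0$ for all $x\in Q_0$. Since $Q$ has no oriented cycles, it has a \emph{sink}, i.e.\ a vertex~$z$ with no outgoing arrows; dually, it has a source~$x_0$ with no incoming arrows, which is the more convenient one here because \cref{lem:min} already tells us that $\sigma_{x_0}\geq 0$ when $x_0$ is such a source. The key computation is that $A_x = \langle \vsigma, L_1^{-1}(\vsigma)$ expanded via the Euler form: writing $\vA = L_1^{-1}(\vsigma)$, the defining relation $\Euler_Q(\vA,\vxi)=\langle\vsigma,\vxi\rangle$ applied to $\vxi = \vdelta_x$ gives
\begin{align*}
  A_x - \sum_{a : x\to y \in Q_1} A_y = \sigma_x.
\end{align*}
For a sink~$z$ this reads $A_z = \sigma_z$, so I first want to know $\sigma_z\geq 0$ for sinks. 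That is the dual of \cref{lem:min}: if $z$ has no \emph{outgoing} arrows, then picking $S_z\subseteq V_z$ of codimension~$1$ and $S_y = V_y$ otherwise makes $\CS$ a quotient-compatible choice — more precisely $\CW$ with $W_z$ of dimension~$1$, $W_y = 0$ otherwise, is a quotient representation of any $r$, so $\vn\twoheadrightarrow_Q \vdelta_z$ and hence $\sigma_z = \langle\vsigma,\vdelta_z\rangle\geq 0$; this shows $A_z = \sigma_z\geq 0$ and $A_z\in\Z$ since $\vsigma\in\Lambda$ and $L_1$ is a lattice isomorphism.

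Next I would peel off a sink~$z$ and pass to the subquiver $Q' = Q\setminus\{z\}$ with dimension vector $\vn'$ (the restriction of $\vn$). The crucial point is to check that the restricted weight $\vsigma'$ — but with a correction! — lies in $\Sigma_{Q',\geo}(\vn')$. Indeed, the relation above shows that the $Q'$-components of $\vA$ are determined by the \emph{same} equations within $Q'$ once we know the single value $A_z$: for $x\neq z$, $A_x - \sum_{a:x\to y, y\neq z} A_y = \sigma_x + (\#\{a: x\to z\})\,A_z =: \sigma'_x$. So $\vA' = (A_x)_{x\neq z} = L_1^{-1}(\vsigma')$ for the quiver $Q'$, where $\vsigma'$ is this \emph{shifted} weight. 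I then need: (i) $\vsigma'\in\Lambda'$ (clear, since $A_z\in\Z$); and (ii) $\vsigma'\in\Sigma_{Q',\geo}(\vn')$, i.e.\ $\langle\vsigma',\vn'\rangle = 0$ and $\langle\vsigma',\vbeta'\rangle\geq 0$ for every $\vn'\twoheadrightarrow_{Q'}\vbeta'$. For the equation: $\langle\vsigma',\vn'\rangle = \langle\vsigma,\vn\rangle - \sigma_z n_z + A_z\sum_{x}(\#\{a:x\to z\}) n_x$; using $\langle\vsigma,\vn\rangle=0$, $\sigma_z = A_z$, and $\Euler_Q(\vA,\vn)=0$ (which gives $\langle\vsigma,\vn\rangle=0$ again, not obviously enough) — I expect this to reduce to $\Euler_Q(\vA,\vn)=0$ rewritten, and this is where I must be careful. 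For the inequalities: a quotient dimension vector $\vbeta'$ of $\vn'$ for $Q'$ extends to a quotient dimension vector $\vbeta = \vbeta' + n_z\vdelta_z$ of $\vn$ for $Q$ (taking the full quotient at~$z$), and $\langle\vsigma,\vbeta\rangle = \langle\vsigma',\vbeta'\rangle - A_z\sum_x(\#\{a:x\to z\})\beta'_x + \sigma_z n_z$; I need this rearrangement to yield $\langle\vsigma',\vbeta'\rangle\geq 0$ from $\langle\vsigma,\vbeta\rangle\geq 0$ together with $A_z\geq 0$ and nonnegativity of the arrow counts and $\beta'_x\leq n'_x$. This last point — getting the signs to line up — is the \textbf{main obstacle}, and it may require choosing a source rather than a sink, or using that for a source $x_0$ one has $A_{x_0}=\sigma_{x_0}\geq 0$ directly and that removing a source interacts more cleanly with the \emph{subrepresentation} (rather than quotient) formulation. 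Once the restricted weight is shown to lie in $\Sigma_{Q',\geo}(\vn')$, the inductive hypothesis gives $\vA'\geq 0$ componentwise and integral, which together with $A_z\geq 0$, $A_z\in\Z$ completes the proof; the base case of a single vertex (and no arrows) is immediate since then $\Euler_Q(\vA,\vxi)=A\cdot\xi$, so $\vA=\vsigma$ and the condition $\langle\vsigma,\vn\rangle=0$ with $n>0$ forces $\vA = 0\geq 0$.

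A cleaner alternative I would seriously consider is to avoid induction entirely and argue directly via \cref{lem:quo}/\cref{cor:deter}: the proposition we are trying to establish, \cref{pro:king vs schofield}, is exactly what is needed to invoke Schofield's criterion \cref{theo:crit} to conclude $\vA+\vn\twoheadrightarrow_Q\vn$, which would give a nonzero determinantal semi-invariant and hence prove \cref{theo:sat}; but \cref{lem:main} is the logically prior step, so this circularity means the self-contained inductive route through sinks/sources is the one to pursue. I therefore expect the final writeup to: (1) observe the WLOG reduction $n_x>0$; (2) prove the sink analogue of \cref{lem:min}; (3) set up the shifted weight $\vsigma'$ on $Q' = Q\setminus\{z\}$ and verify it lies in $\Sigma_{Q',\geo}(\vn')$ — the one delicate verification being the inequality transfer; (4) apply induction and reassemble $\vA$ from $\vA'$ and $A_z$.
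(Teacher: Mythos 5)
Your plan contains two concrete errors before one even reaches the step you yourself flag as open. First, the formula you derive for $\vA=L_1^{-1}(\vsigma)$ is wrong: from \cref{eq:euler form}, $\Euler_Q(\vA,\vdelta_x)=A_x-\sum_{a:y\to x}A_y$, a sum over arrows \emph{into} $x$ (what you wrote, $A_x-\sum_{a:x\to y}A_y$, is $\langle L_2(\vA),\vdelta_x\rangle$, not $\langle L_1(\vA),\vdelta_x\rangle$). Consequently it is at a \emph{source} $x_0$, not at a sink, that $A_{x_0}=\sigma_{x_0}$, and that case is already \cref{lem:min}. Second, the ``sink analogue of \cref{lem:min}'' is false: for a sink $z$, taking $S_z\subseteq V_z$ of codimension one and $S_y=V_y$ elsewhere does \emph{not} give a subrepresentation of a general $r$, because each arrow $a:x\to z$ imposes $r_a(V_x)\subseteq S_z$. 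Concretely, for $Q=(x\to z)$ and $\vn=(1,1)$ one has $\Euler_Q(\vn-\vdelta_z,\vdelta_z)=-1<0$, so $\vn\not\twoheadrightarrow_Q\vdelta_z$, and indeed $\vsigma=(k,-k)$ lies in $\Sigma_{Q,\geo}(\vn)\cap\Lambda$ for every $k\geq 0$ (it is the weight of the semi-invariant $r\mapsto r^k$), so $\sigma_z$ can be negative at a sink. This matters because your whole induction is anchored on $A_z=\sigma_z\geq 0$ at the peeled vertex.

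Even with the orientation corrected (peel off a source $x_0$, where $A_{x_0}=\sigma_{x_0}\geq 0$ is genuine), the step you call the ``main obstacle''---showing that the shifted weight $\vsigma'$ on $Q'=Q\setminus\{x_0\}$ again lies in $\Sigma_{Q',\geo}(\vn')$---is exactly where the content of the lemma sits, and you give no argument for it: a Schofield quotient dimension vector $\vbeta'$ for $Q'$ only extends to one for $Q$ with a compatible choice at the removed vertex, and the resulting inequality $\langle\vsigma,\vbeta\rangle\geq 0$ does not visibly dominate $\langle\vsigma',\vbeta'\rangle$ once the correction term proportional to $A_{x_0}$ enters. So the proposal is a sketch with a gap, not a proof. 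The paper proceeds quite differently and avoids any induction on the quiver: it writes $\vA=P\Iso(\vsigma)$ with $P=(I-M)^{-1}$ the path-counting matrix, and for \emph{each} vertex $x_k$ it produces an explicit Schofield subdimension vector $\vtheta_k\hookrightarrow_{Q^*}\vn$, namely the dimension vector of the generic subrepresentation of $Q^*$ generated by a single vector at $x_k$ (its component at $x_j$ has dimension $t_{jk}\leq p_{jk}$). The King inequalities $\langle\vsigma,\vtheta_k\rangle\geq 0$, combined with the identity $\vA=W\vA+\sum_k\langle\vsigma,\vtheta_k\rangle\vdelta_{x_k}$ where $W=I-T(I-M)$ is strictly lower triangular with nonnegative entries, yield $A_{x_k}\geq 0$ by induction along the topological order. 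If you want to rescue your approach, the missing ingredient is precisely this family of ``one-generator'' subrepresentations; the single codimension-one subrepresentation at a source is not enough information about $\Sigma_{Q,\geo}(\vn)$ to force all $A_x\geq 0$.
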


\begin{proof}
Because~$L_1$ is an isomorphism between~$\Gamma$ and~$\Lambda$, we already know that~$\vA \in \Gamma$, so it remains to prove that it is entrywise nonnegative.
Because~$Q$ is acyclic, we can enumerate the vertices as~$Q_0 = \{x_1,\dots,x_K\}$ in a topological order.
That is, if $m_{ij}$ denotes the number of arrows from $x_i$ to $x_j$ then we have $m_{ij} = 0$ if $i > j$.
Define the map
\[ M \colon \z_Q \to \z_Q, \quad M(\vdelta_{x_i}) = \sum_{i < j} m_{ij} \vdelta_{x_j} = \sum_{a : x_i \to x_j \in Q_1} \vdelta_{x_j}. \]
Let $\Iso \colon \z_Q^*\to \z_Q$ denote the map such that $\Iso(\vdelta^x)=\vdelta_x$ for all $x \in Q_0$.
Then we have
\[ \Euler_Q(\valpha, \Iso(\vtau)) = \langle \vtau, (I - M) \valpha \rangle
\qquad (\forall \valpha \in \z_Q, \vtau \in \z_Q^*)
\]
Observe that $M$ is strictly lower triangular.
Hence $I - M$ is invertible and its inverse is again an integer matrix:
\begin{align*}
  P \coloneqq (I - M)^{-1} = \sum_{k=0}^K M^k.
\end{align*}
Observe that $P\vdelta_{x_i} =\sum_j  p_{ij}\vdelta_{x_j}$, with $p_{ij}$ the number of paths from~$x_i$ to~$x_j$.
By convention, the number of paths of length~$0$ is $1$, so $p_{ii}=1$.
We have $p_{ij}=0$ if~$i>j$.
In particular,
\begin{align}\label{eq:A}
  \vA = (I - M)^{-1} \Iso(\vsigma) = P \Iso(\vsigma),
\end{align}
that is,
\begin{align*}
  A_{x_k} = \sigma_{x_k} + \sum_{i < k} p_{ik} \sigma_{x_i}.
\end{align*}
To prove that ${A}_{x_k}\geq 0$, we will use the inequalities $\langle \vsigma,\vbeta\rangle \geq 0$ for natural Schofield subdimension vectors  $\vbeta\hookrightarrow_{Q^*}\vn$.%
\footnote{Note that since $x_1$ is minimal, we have $A_{x_1}=\sigma_{x_1} \geq 0$ by \cref{lem:min}.}
To this end we consider subrepresentations generated by nonzero vectors $0\neq v_k\in \CV^*_{x_k}$ (such vectors exist since $n_{x_k} > 0$).
Let $s=(s_a)_{a\in Q_1^*}\in \CH_{Q^*}(\CV^*)$.
If~$j<k$ and~$p$ is any path from~$x_k$ to~$x_j$ in the quiver~$Q^*$, we denote by~$s_p\in \Hom(\CV^*_{x_k},\CV^*_{x_j})$
the composition of the matrices~$s_a$ along the path~$p$.
Denote by $\Sub(v_k,s,x_j)$ the subspace of~$\CV_{x_j}^*$ generated by the elements~$s_p(v_k)$ of $\CV_{x_{j}}^*$, where $p$ runs on all paths from~$x_k$ to~$x_j$ in~$Q^*$.
Because in the quiver~$Q^*$ we have $p_{jk}$ paths from~$x_k $ to~$x_j$, the dimension of this subspace is less or equal to~$p_{jk}$.%
\footnote{If all entries of the dimension vector $\vn$  are sufficiently large, then the dimension of this subspace is equal to $p_{jk}$ for generic $s$.}
For $j=k$, define  $\Sub(v_k,s,x_j)=\C v_k$.
For $j>k$, define  $\Sub(v_k,s,x_j)=0$.
By construction, for any $v_k\in \CV_{x_k}^*$, the family $\CS(v_k,s)$
of subspaces $\CS(v_k,s)_{x_j}=\Sub(v_k,s,x_j)$ defines a subrepresentation of~$s$.
Let $t_{jk}$ denote the maximal dimension of $\Sub(v_k,s,x_j)$ when $s$ varies in $\CH_{Q^*}(\CV^*)$.
This maximum does not depend the choice of nonzero~$v_k$ in $\CV_{x_k}^*$, since we can conjugate $v_k$ to any other non-zero element by the action of $\GL_Q(\CV^*)$.
Thus, for any choice of~$v_k$ there is a Zariski open subset of~$s \in \CH_{Q^*}(\CV^*)$ such that $\dim \CS(v_k,s) = \vtheta_k$, where $\vtheta_k \coloneqq \sum_j t_{jk} \vdelta_{x_j}$.
It follows that~$\vtheta_k \hookrightarrow_{Q^*} \vn$, i.e., $\vn \twoheadrightarrow_Q \vtheta_k$, because $\vsigma \in \Sigma_{Q,\geo}(\vn)$ we see that%
\footnote{For $\vn$ large, $ \langle \vsigma, \vtheta_k\rangle =\sigma_k +\sum_{i<k} p_{ik} \sigma_i$, so is precisely equal to $A_{x_k}$, and hence $A_{x_k} \geq 0$.
So, at least when $\vn$ is large, \cref{lem:main} is clear.}
\[ \langle \vsigma, \vtheta_k \rangle \geq 0. \]
Define
\[ T\colon\z_Q \to \z_Q, \quad T(\vdelta_{x_i})=\sum_{i\leq j} t_{ij} \vdelta_{x_j}, \]
and consider
\begin{align*}
  W \coloneqq I - T(I - M) = (I - T) + TM
\end{align*}
We will now show that $W$ is entrywise nonnegative.%
\footnote{For $\vn$ large, $T=P=(I-M)^{-1}$, so $W=0$.}
The matrix~$W$ is strictly lower triangular since~$M$ and~$(I-T)$ are strictly lower triangular.
In fact:
\begin{align*}
  W(\vdelta_{x_i})
= -\sum_{i<k}t_{ik} \vdelta_{x_k}+\sum_{i<j\leq k}m_{ij}t_{jk} \vdelta_{x_k}
\end{align*}
Thus, the matrix~$W$ has nonnegative entries if and only if, for all $i < k$,
\begin{equation}\label{eq:nonneg}
  t_{ik} \leq \sum_{i<j\leq k} m_{ij} t_{jk} = m_{ik} + \sum_{i<j<k} m_{ij} t_{jk}.
\end{equation}
Recall that, for $s$ in a Zariski open subset of $\CH_{Q^*}(\CV^*)$, for all $k > i$, $t_{ik}$ is the dimension of the vector space $ \Sub(v_k,s,x_i) \subseteq \CV_{x_i}^*$  spanned by the vectors $s_p(v_k)$ where $p$ runs over all paths from~$x_k$ to~$x_i$ in $Q^*$.
There are $m_{ik}$ arrows from~$x_k$ to~$x_i$, hence the paths of length one contribute at most~$m_{ik}$ to~$t_{ik}$.
All other paths are composed of an arrow from~$x_k$ to some~$x_j$ and some path from~$x_j$ to~$x_k$, hence they contribute at most~$m_{ij} t_{jk}$, for any~$j$.
This proves \cref{eq:nonneg} and hence all entries of the matrix~$W$ are nonnegative.

Now recall from \cref{eq:A} that $\vA = (I - M)^{-1} \Iso(\vsigma)$.
It follows that $W \vA = \vA - T \Iso(\vsigma)$ and hence
\begin{align*}
  \vA = W \vA + T \Iso(\vsigma) = W \vA + \sum_k \langle \vsigma, \vtheta_k \rangle \vdelta_{x_k},
\end{align*}
where $W$~is strictly lower triangular with nonnegative entries and $\langle \vsigma, \vtheta_k \rangle \geq 0$ for all~$k$.
Thus we must have $A_1 = \langle \vsigma, \vtheta_1 \rangle \geq 0$,%
\footnote{We note that this is equivalent to, and hence reproves, \cref{lem:min}.} and then it follows by induction that~$A_{x_k} \geq 0$ for all~$k$.
This concludes the proof.
\end{proof}

\begin{example}
Consider a quiver with $K=3$ vertices and
\[ M = \begin{bmatrix} 0&0&0\\ m_{{1,2}}&0&0
\\ m_{{1,3}}&m_{{2,3}}&0\end{bmatrix}. \]
Then,
\begin{align*}
  P&=\begin{bmatrix} 1&0&0\\ m_{{1,2}}&1&0
\\ m_{{2,3}}m_{{1,2}}+m_{{1,3}}&m_{{2,3}}&1
\end{bmatrix},
\qquad
  T=\begin{bmatrix} 1&0&0\\ t_{{1,2}}&1&0
\\ t_{{1,3}}&t_{{2,3}}&1
\end{bmatrix},
\intertext{and}
W&=\begin{bmatrix} 0&0&0\\ -t_{{1,2}}+m_{{1
,2}}&0&0\\ t_{{2,3}}m_{{1,2}}+m_{{1,3}}-t_{{1,3}}&-t
_{{2,3}}+m_{{2,3}}&0\end{bmatrix}.
\end{align*}
\end{example}

\begin{proof}[Proof of \cref{theo:sat}]
The inclusion $\subseteq$ holds due to King's lemma (\cref{lem:king}).

For the converse inclusion $\supseteq$, let $\vsigma\in \Sigma_{Q,\geo}(\vn) \cap \Lambda$ and set $\vA \coloneqq L_1^{-1}(\vsigma)$.
\Cref{pro:king vs schofield}, along with Schofield's criterion (\cref{theo:crit}), implies that~$\vA + \vn \twoheadrightarrow \vn$.
The proposition also states that $\Euler_Q(\vA, \vn) = 0$.
Using \cref{cor:deter} (with~$\vA$, $\vN = \vA + \vn$, and $\vB = \vn$), we conclude that $C^{\vA,\vn}(v,\cdot)$ is a nonzero semi-invariant of weight~$\vsigma$ on~$\CH_Q(\CG)$ for generic $v \in \CH_Q(\CF)$, where $\dim\CF = \vA$ and $\dim\CG = \dim\CV = \vn$.
Thus, $\vsigma \in \Sigma_{\rep,Q}(\vn)$.
\end{proof}

\section*{Acknowledgments}
We thank Harm Derksen for sharing his insights on the history of different approaches to the saturation theorem.

\begin{bibdiv}
\begin{biblist}

\bib{MR4671379}{article}{
  author   = {Baldoni, Velleda},
  author= { Vergne, Mich\`ele},
  author= {Walter, Michael},
  title    = {Horn conditions for quiver subrepresentations and the moment
              map},
  journal  = {Pure Appl. Math. Q.},
  volume   = {19},
  year     = {2023},
  number   = {4},
  pages    = {1687--1731}}


\bib{Cra-Bo-Ge}{inproceedings}{
  author={Crawley-Boevey,  William},
  author={Geiss, Christof},
  title={Horn's problem and semi-stability for quiver representations},
  booktitle={Proceedings of the Ninth International Conference on Representations of Algebras, Beijing Normal University},
  publisher={Beijing Normal University Press},
  pages={40--48},
  year={2002},
}

\bib{MR1758750}{article}{
  author={Derksen, Harm},
  author={Weyman, Jerzy},
  title={Semi-invariants of quivers and saturation for Littlewood-Richardson coefficients},
  journal={J. Amer. Math. Soc.},
  volume={13},
  date={2000},
  pages={467--479},
}


\bib{MR1315461}{article}{
   author={King, Alastair D.},
   title={Moduli of representations of finite-dimensional algebras},
   journal={Quart. J. Math. Oxford Ser. (2)},
   volume={45},
   date={1994},
   number={180},
   pages={515--530},
}

\bib{MR1671451}{article}{
  author={Knutson, Allen},
  author={Tao, Terence},
  title={The honeycomb model of $\GL_n(\C)$ tensor products. I. Proof of the saturation conjecture},
  journal={J. Amer. Math. Soc.},
  volume={12},
  date={1999},
  pages={1055--1090},
}


\bib{ressayrepc}{unpublished}{
author={Ressayre, Nicolas},
title={private communication},
year={2018}
}

\bib{MR1162487}{article}{
  author={Schofield, Aidan},
  title={General representations of quivers},
  journal={Proc. London Math. Soc. (3)},
  volume={65},
  date={1992},
  pages={46--64},
}

\bib{MR1908144}{article}{
  author={Schofield, Aidan},
  author={Van den Bergh, Michel},
  title={Semi-invariants of quivers for arbitrary dimension vectors},
  journal={Indag. Math. (N.S.)},
  volume={12},
  date={2001},
  pages={125--138}
}

\end{biblist}
\end{bibdiv}
\end{document}